\newtheorem{thm}{Theorem}[section]
\newtheorem{prop}[thm]{Proposition}
\newtheorem{lemma}[thm]{Lemma}
\theoremstyle{definition}
\newtheorem{defin}[thm]{Definition}
\newtheorem{example}{Example}
\newcommand\dom{\mathop\mathrm{dom}}
\newcommand\FV{\mathop\text{FV}}
\newcommand\rel{\mathop\text{rel}}
\newcommand\ESO{\mathrm{ESO}}
\newcommand\nmodels\nvDash
\renewcommand\models\vDash
\newcommand\I{\mathrm{I}}
\newcommand\FO{\mathrm{FO}}
\newcommand\Mod{\mathop\mathrm{Mod}}
\newcommand\df{\mathrm{D}}
\newcommand\M{\mathbb{M}}
\def\dep{D}
\renewcommand\varphi\phi
\renewcommand\overline\bar
\title{Characterizing Quantifier Extensions of Dependence Logic} 
\author{Fredrik Engstr\"om \and Juha Kontinen}
\date{\today}
\begin{document}

\begin{abstract} We characterize the expressive power	of extensions of 
	Dependence Logic and Independence Logic  by monotone generalized quantifiers 
	in terms of quantifier extensions of existential second-order logic.
\end{abstract}

\maketitle

\section{Introduction}

We study extensions of dependence logic $\df$  by monotone first-order 
generalized quantifiers. Dependence logic \cite{Vaananen:2007}  extends 
first-order logic by dependence atomic formulas
\begin{equation*}\dep(t_1,\ldots,t_n)
\end{equation*} the meaning of which is that the value of the term $t_n$ is
functionally determined by the values of $t_1,\ldots, t_{n-1}$. While in 
first-order logic
the order of quantifiers solely determines the dependence relations between 
variables, in
dependence logic more general dependencies between variables can be expressed.  
In fact,  dependence logic is equivalent to existential second-order logic 
$\ESO$ in expressive power. Historically dependence logic was preceded by  
partially ordered quantifiers (Henkin quantifiers) of Henkin  
\cite{Henkin:1961} and Independence-Friendly (IF) logic of
Hintikka and Sandu \cite{Hintikka:1989}. 

The framework of dependence logic, so-called team semantics,  has turned out 
be very flexible to allow
interesting generalizations. For example, the extensions of dependence logic 
in terms of  intuitionistic implication and linear implication was  introduced 
in
\cite{Abramsky:2009}. Also new variants of the dependence atoms was introduced 
in \cite{Engstrom:2011},
\cite{Gradel:2011} and \cite{Galliani:2011}, and generalized quantifiers  in  
\cite{Engstrom:2011} and \cite{Kontinen:2011A}.

Engstr\"om, in \cite{Engstrom:2011}, considered extensions of $\df$ in terms 
of first-order generalized quantifiers. The reason for doing so was partly to 
have a logical framework to analyze partially ordered generalized quantifier 
prefixes compositionally. The paper introduces a general schema to extend 
dependence logic with first-order generalized quantifiers.
There are also alternative ways of extending dependence logic with generalized 
quantifiers, as in \cite{Kontinen:2011A}, where a version of the majority 
quantifier for dependence logic is studied. It is shown that dependence logic 
with that majority quantifier leads to a new descriptive complexity 
characterization of
the counting hierarchy. 

In this paper we continue the study of the logics $\df(Q)$ in the framework 
developed in  \cite{Engstrom:2011}. Our main result shows that the logic
$\df(Q)$ is equivalent, for sentences, to  $\ESO(Q)$, i.e., existential 
second-order logic extended with $Q$. 
We also show analogous characterizations for extensions of 
Independence logic $\I(Q)$, a variant of dependence logic introduced in 
\cite{Gradel:2011} and independently in \cite{Engstrom:2011}, by generalized 
quantifiers. At the end of the paper, we  characterize the open formulas of $\I(Q)$. For $\df(Q)$, finding a characterization of the open formulas remains open. 

\section{Preliminaries}

\subsection{Dependence Logic}

In this section we give a brief introduction to dependence logic. For a 
detailed account see \cite{Vaananen:2007}.

The syntax of dependence logic extends the syntax of first-order logic with 
new atomic formulas, the dependence atoms. There is one
dependence atom for each arity. We write the atom expressing that the term 
$t_n$ is uniquely determined by the values of the terms $t_1,\ldots,t_{n-1}$ 
as
$\dep(t_1,\ldots,t_n)$.\footnote{The dependence atom is denoted by 
	$\mathord=(t_1,\ldots,t_n)$ in the original exposition 
	\cite{Vaananen:2007}.} We assume that all formulas of dependence logic are 
written in negation
normal form, i.e., all negations in formulas occur in front of atomic 
formulas. For a vocabulary $\tau$, $\df[\tau]$ denotes the set of 
$\tau$-formulas of dependence logic. 

%Note that negation in dependence logic is is not contradictory negation, for 
%example we have that  $\nmodels \forall x,y \bigl(\dep(x,y) \lor \lnot 
%\dep(x,y)\bigr)$, as we will see later.

The set of free variables of a formula is defined as in first-order logic with 
the extra clause that all variables in a dependence atom are free. We denote 
the set of free variables of a formula $\varphi$ by $\FV(\varphi)$.

To define a compositional semantics for dependence logic we use \emph{sets of 
	assignments}, called \emph{teams} instead of single assignments as in 
first-order logic. An assignment
is a function $s: V \to M$ where $V$ is a finite set of variables and
$M$ is the universe under consideration. Given a universe $M$ a team of $M$ is 
a set of assignments for some fixed finite set of variables $V$. If 
$V=\emptyset$ there is only one assignment, the empty assignment, denoted by 
$\epsilon$. Observe that the team of the empty assignment $\set{\epsilon}$ is 
different from the empty team $\emptyset$.

Given an assignment $s: V \to M$ and $a \in M$ let $s[a/x]: V \cup \set{x} \to 
M$ be the assignment:
$$
s[a/x]: y \mapsto  \begin{cases}
s(y)  &\text{ if $y \in V \setminus \set{x}$, and}\\
a  &\text{ if $x=y$.}
\end{cases}
$$
Furthermore, let $X[M/y]$ be the team $$\set{s[a/y] | s \in X, a \in M},$$
and whenever $f: X \to M$, let $X[f/y]$ denote $$\set{s[f(s)/y] | s \in X}.$$
The domain of a non-empty team $X$, denoted $\dom(X)$, is the set of variables 
$V$.
 The interpretation of the term $t$ in the model $\M$ under the assignment $s$ 
 is denoted by $t^{M,s}$. 
%$\M ,X \models  \varphi$ means that the formula $\varphi$  is satisfied  by 
%the \emph{team} $X$ in the structure $\M$. 

The satisfaction relation for dependence logic $\M ,X \models  \varphi$ is now 
defined as follows.  Below, the notation $\M,s \models \varphi$ refers to the 
ordinary satisfaction relation of first-order logic.

\begin{enumerate}
\item For first-order atomic or negated atomic formulas $\psi$: $ \M ,X 
	\models  \psi \text{ iff } \forall s \in X: \M,s \models \psi$.
\item $\M ,X \models  \dep(t_1,\ldots,t_{n+1}) \text{ iff }
\forall s,s' \in X \bigwedge_{1\leq i \leq n}t_i^{\M,s}
=t_i^{\M,s'}  \rightarrow  t_{n+1}^{\M,s} =t_{n+1}^{\M,s'}$
\item $\M ,X\models \lnot \dep(t_1,\ldots,t_{n+1}) \text{ iff } X =
\emptyset $
\item $\M ,X \models  \varphi \land \psi \text{ iff }\M ,X \models \varphi 
	\text{ and } \M ,X \models  \psi$
\item $\M ,X \models  \varphi \lor \psi \text{ iff } \exists Y,Z  \text{ s.t.  
	} X= Y \cup Z, \text{ and both } \M,Y \models  \varphi \text{ and } \M,Z 
	\models \psi$
\item $\M ,X \models  \exists y \varphi \text{ iff } \exists f: X \to M, 
	\text{ such that } \M,{X[f/y]}
\models \varphi $
\item $\M ,X \models  \forall y \varphi \text{ iff }  \M,X[M/y]
\models \varphi.$
\end{enumerate}

%Observe that for some teams $X$ we have $\M,X \nmodels \dep(x,y)$ and $\M,X 
%\nmodels \lnot \dep(x,y)$. In fact this is the case when $\M$ has at least 
%two elements and $X$ is the full team of all assignments of $x$ and $y$.  
%Therefore, $\nmodels \forall x,y (\dep(x,y) \lor \lnot \dep(x,y))$, showing 
%that negation in dependence logic is not contradictory negation. 

We define $M \models \sigma$ for a sentence $\sigma$ to hold if $M , 
{\set{\epsilon}} \models \sigma$.

Let us make some easy remarks. First, every formula is satisfied by the empty 
team. Second, satisfaction is preserved under taking subteams:
\begin{prop}\label{prop.down}
If $\M ,X \models  \varphi$ and $Y \subseteq X$ then $\M ,Y \models  \varphi$.  
\end{prop}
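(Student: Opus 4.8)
The plan is to prove the statement by induction on the structure of the formula $\varphi$, following the clauses (1)--(7) defining the satisfaction relation. Downward closure should fall out immediately in each case, the only mild bookkeeping occurring in the disjunction and existential clauses.

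For the base cases, first suppose $\varphi$ is a first-order atomic or negated atomic formula $\psi$. By clause (1), $\M,X\models\psi$ means every $s\in X$ satisfies $\psi$ in the ordinary first-order sense; since $Y\subseteq X$, the same holds for every $s\in Y$, so $\M,Y\models\psi$. If $\varphi$ is a dependence atom $\dep(t_1,\ldots,t_{n+1})$, the defining condition in clause (2) is a universal statement quantifying over pairs $s,s'\in X$, and is therefore inherited by pairs drawn from the subset $Y$. If $\varphi$ is a negated dependence atom, clause (3) forces $X=\emptyset$, hence $Y=\emptyset$ as well, and the empty team satisfies every formula.

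For the inductive step, the conjunction case is immediate from clause (4) and the induction hypothesis applied to both conjuncts. For a disjunction $\varphi\lor\psi$, clause (5) gives a decomposition $X=Y_0\cup Z_0$ with $\M,Y_0\models\varphi$ and $\M,Z_0\models\psi$; I would put $Y'=Y\cap Y_0$ and $Z'=Y\cap Z_0$, observe $Y=Y'\cup Z'$ together with $Y'\subseteq Y_0$ and $Z'\subseteq Z_0$, and then invoke the induction hypothesis on each disjunct. For $\exists y\,\varphi$, clause (6) provides some $f\colon X\to M$ with $\M,X[f/y]\models\varphi$; restricting $f$ to $Y$ gives $g=f\restriction Y\colon Y\to M$ with $Y[g/y]\subseteq X[f/y]$, so the induction hypothesis applies to $\varphi$ at the smaller team $Y[g/y]$. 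For $\forall y\,\varphi$, from $Y\subseteq X$ we immediately get $Y[M/y]\subseteq X[M/y]$, and clause (7) with the induction hypothesis closes the case.

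There is no genuine obstacle here; the only points requiring attention are the routine set-theoretic verifications $Y=(Y\cap Y_0)\cup(Y\cap Z_0)$ in the disjunction case and $Y[g/y]\subseteq X[f/y]$ for $g=f\restriction Y$ in the existential case, both of which are straightforward.
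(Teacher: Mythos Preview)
Your proposal is correct and is precisely the standard structural induction that the paper has in mind; the paper itself does not spell out a proof, merely flagging the proposition as an ``easy remark'' (and later, for the analogous statement in $\df(Q)$, referring to ``easy inductions on the construction of $\varphi$''). Your case analysis is the expected one and contains no gaps.
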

And thirdly, the satisfaction relation is invariant of the values of the 
non-free variables of the formula:
\begin{prop}
$\M,X \models \varphi$ iff $\M,Y\models \varphi$ where $Y=\set{s 
	\upharpoonright \FV(\varphi) |  s \in X}$.
\end{prop}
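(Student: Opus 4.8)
The plan is to prove, by induction on $\varphi$, the slightly stronger statement that
\[\M,X\models\varphi\iff\M,X\upharpoonright V\models\varphi\]
for every set of variables $V$ with $\FV(\varphi)\subseteq V\subseteq\dom(X)$, where $X\upharpoonright V:=\set{s\upharpoonright V | s\in X}$; the proposition is the instance $V=\FV(\varphi)$, and the case $X=\emptyset$ is trivial since every formula is satisfied by $\emptyset$ and $\emptyset\upharpoonright V=\emptyset$. Allowing an arbitrary $V$ between $\FV(\varphi)$ and $\dom(X)$, rather than insisting on $V=\FV(\varphi)$, is what lets the induction pass through connectives and quantifiers, because e.g.\ $\FV(\varphi)$ and $\FV(\psi)$ are in general only \emph{contained in} $\FV(\varphi\lor\psi)$.

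For the atomic cases I would use the standard fact that the value $t^{\M,s}$ of a term depends only on the restriction of $s$ to the variables occurring in $t$. For a first-order atom or negated atom $\psi$ these variables all lie in $\FV(\psi)\subseteq V$, so $\M,s\models\psi$ iff $\M,s\upharpoonright V\models\psi$, and quantifying over $s\in X$ gives the claim; the dependence atom is identical, and its negation merely asserts $X=\emptyset$, which is equivalent to $X\upharpoonright V=\emptyset$. Conjunction is immediate from the induction hypothesis. For $\varphi\lor\psi$, the forward direction is easy: a split $X=Y\cup Z$ restricts to $X\upharpoonright V=(Y\upharpoonright V)\cup(Z\upharpoonright V)$, and one applies the induction hypothesis to each disjunct with the same $V$. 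For the converse, given $X\upharpoonright V=Y_1\cup Y_2$ I would pull the split back by putting $X_i:=\set{s\in X | s\upharpoonright V\in Y_i}$ and checking that $X=X_1\cup X_2$ and $X_i\upharpoonright V=Y_i$; the induction hypothesis then finishes the job.

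For the quantifiers it is convenient to assume $y\notin V$ (if $y\in V$, replace $V\cup\set{y}$ by $V$ throughout). The identities to verify are
\[X[M/y]\upharpoonright(V\cup\set{y})=(X\upharpoonright V)[M/y]\quad\text{and}\quad X[f/y]\upharpoonright(V\cup\set{y})=\set{(s\upharpoonright V)[f(s)/y] | s\in X},\]
and, noting $\FV(\varphi)\subseteq V\cup\set{y}\subseteq\dom(X[M/y])=\dom(X[f/y])$, the universal case follows directly from the first identity and the induction hypothesis applied with $V\cup\set{y}$. For $\exists y\varphi$ the backward direction is also easy: a witnessing $g\colon X\upharpoonright V\to M$ lifts to $f\colon X\to M$, $f(s):=g(s\upharpoonright V)$, and then $X[f/y]\upharpoonright(V\cup\set{y})=(X\upharpoonright V)[g/y]$, so the induction hypothesis gives $\M,X[f/y]\models\varphi$.

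I expect the forward direction of the existential case to be the only real obstacle. A witness $f\colon X\to M$ for $\M,X\models\exists y\varphi$ need not be constant on the fibres of the possibly non-injective map $s\mapsto s\upharpoonright V$, so it does not descend to a function on $X\upharpoonright V$. To get around this I would choose, for each $t\in X\upharpoonright V$, some $s_t\in X$ with $s_t\upharpoonright V=t$ and set $g(t):=f(s_t)$. Then $(X\upharpoonright V)[g/y]$ is a \emph{subteam} of $X[f/y]\upharpoonright(V\cup\set{y})$, the latter satisfies $\varphi$ by the induction hypothesis (since $\M,X[f/y]\models\varphi$), and hence $\M,(X\upharpoonright V)[g/y]\models\varphi$ by downward closure (Proposition~\ref{prop.down}); thus $\M,X\upharpoonright V\models\exists y\varphi$, completing the induction. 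This is the one point where Proposition~\ref{prop.down} is genuinely needed.
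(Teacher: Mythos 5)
Your proof is correct; the paper states this locality property without proof as one of its ``easy remarks'', and your argument --- strengthening the induction hypothesis to an arbitrary $V$ with $\FV(\varphi)\subseteq V\subseteq\dom(X)$, pulling splits back through the restriction map for disjunction, and invoking downward closure (Proposition~\ref{prop.down}) precisely in the forward direction of the existential case, where the witness function need not factor through $s\mapsto s\upharpoonright V$ --- is exactly the standard one. No gaps.
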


The satisfaction relation for first-order formulas reduces to ordinary 
satisfaction in the following way.

\begin{prop}\label{foisflat}
 For first-order formulas $\varphi$ and teams $X$, $\mathbb M ,X \models  
 \varphi$ iff
for all $s \in X: \mathbb M ,s \models  \varphi$.
\end{prop}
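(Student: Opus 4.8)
The plan is to prove both directions simultaneously by induction on the structure of the first-order formula $\varphi$, using crucially that $\varphi$ is in negation normal form, so that negation occurs only in front of atoms. The base case, where $\varphi$ is a first-order atomic or negated atomic formula, is exactly clause~(1) of the satisfaction relation and therefore holds by definition. For a conjunction $\varphi \land \psi$ I would unfold clause~(4), apply the induction hypothesis to each conjunct, and note that ``$\M,s\models\varphi$ for all $s\in X$ and $\M,s\models\psi$ for all $s\in X$'' is literally ``$\M,s\models\varphi\land\psi$ for all $s\in X$''. The universal quantifier case $\forall y\,\varphi$ is handled the same way: by clause~(7) and the induction hypothesis, $\M,X\models\forall y\,\varphi$ iff every assignment in $X[M/y]$ satisfies $\varphi$ in the ordinary sense, i.e. iff $\M,s[a/y]\models\varphi$ for every $s\in X$ and every $a\in M$, which is precisely $\M,s\models\forall y\,\varphi$ for all $s\in X$.

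The two cases that need a small amount of genuine work are disjunction and the existential quantifier, because their clauses are not pointwise. For $\varphi\lor\psi$, the forward direction is immediate: if $X=Y\cup Z$ with $\M,Y\models\varphi$ and $\M,Z\models\psi$, then by the induction hypothesis each $s\in X$ lies in $Y$ or in $Z$ and hence classically satisfies $\varphi$ or $\psi$. For the converse, assuming every $s\in X$ classically satisfies $\varphi\lor\psi$, I would split $X$ as $Y=\set{s\in X:\M,s\models\varphi}$ and $Z=\set{s\in X:\M,s\models\psi}$; then $X=Y\cup Z$, and the induction hypothesis (or, alternatively, Proposition~\ref{prop.down}) gives $\M,Y\models\varphi$ and $\M,Z\models\psi$, so $\M,X\models\varphi\lor\psi$. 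For $\exists y\,\varphi$, clause~(6) together with the induction hypothesis says that $\M,X\models\exists y\,\varphi$ iff there is $f\colon X\to M$ with $\M,s[f(s)/y]\models\varphi$ for all $s\in X$; the forward direction then takes $a=f(s)$ as a witness for $\M,s\models\exists y\,\varphi$, and for the converse one defines $f$ by selecting, for each $s\in X$, some $a_s$ with $\M,s[a_s/y]\models\varphi$.

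The only subtlety worth flagging is the degenerate team $X=\emptyset$: there the right-hand side holds vacuously, the left-hand side holds because the empty team satisfies every formula, and in the existential step the empty function is an admissible witness $f$; keeping this in mind lets the induction run uniformly. I do not expect a real obstacle here — the argument is a routine structural induction, and the disjunction and existential clauses are the only places where team semantics departs from pointwise satisfaction and thus require the minor repackaging described above.
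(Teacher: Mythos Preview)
Your argument is correct and is exactly the standard structural induction one expects here. However, there is nothing to compare against: the paper states this proposition without proof, treating it as a well-known fact (it is the flatness of first-order logic in team semantics, proved in V\"a\"an\"anen's book). Your induction, including the handling of $\lor$ via splitting and of $\exists$ via a choice function, is the canonical proof.
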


By formalizing the satisfaction relation of dependence logic in existential 
second order logic we get the following upper bound on the expressive power of 
dependence logic.
For a team $X$ with domain $\set{x_1,\ldots,x_k}$, let $\rel(X) $ be the 
$k$-ary relation $\set{\langle s(x_1),\ldots,s(x_k)\rangle | s \in X}$. 

\begin{prop}\label{prop23}
Let $\tau$ be a vocabulary and $\varphi$ a $\df[\tau]$-formula with free 
variables $x_1,\dots, x_k$. Then
there is a $\tau\cup \set{R}$-sentence $\psi$ of $\ESO$, in which $R$ appears 
only negatively,  such that for all models $ \M$ and teams $X$ with domain 
$\set{x_1,\dotsc, x_k}$:
\[\M,X\models \varphi \iff (\mathbb{M},\rel(X))\models\psi . \]
\end{prop}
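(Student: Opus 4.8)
The plan is to prove this by induction on the structure of the $\df[\tau]$-formula $\varphi$, at each stage constructing an $\ESO$-sentence $\psi_\varphi$ over $\tau \cup \{R\}$, where $R$ is a fresh relation symbol of arity $k = |\FV(\varphi)|$, such that $\M, X \models \varphi$ iff $(\M, \rel(X)) \models \psi_\varphi$, and maintaining as an invariant that $R$ occurs only negatively in $\psi_\varphi$. The intended reading is that $R$ is a ``name'' for the team $X$ passed in as a parameter; the body of $\psi_\varphi$ should say ``the set named by $R$ satisfies $\varphi$'', and by downward closure (Proposition~\ref{prop.down}) it is harmless — in fact necessary — that the sentence only constrains $R$ from above, i.e.\ $R$ appears negatively. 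For bookkeeping I would actually prove a slightly more flexible statement in which $\FV(\varphi) \subseteq \{x_1,\dots,x_k\}$ and $R$ is $k$-ary, so that the free-variable sets of the two disjuncts/conjuncts can be padded to a common tuple; Proposition~\ref{foisflat} (and the remark that satisfaction ignores non-free variables) makes this harmless.

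For the base cases: if $\psi$ is a first-order atom or negated atom, then by Proposition~\ref{foisflat} we want ``$\forall \bar x (R\bar x \to \psi)$'', which contains $R$ negatively. For $\dep(t_1,\dots,t_{n+1})$ the sentence is the first-order formula $\forall \bar x \forall \bar y \big( R\bar x \wedge R\bar y \wedge \bigwedge_i t_i[\bar x] = t_i[\bar y] \to t_{n+1}[\bar x] = t_{n+1}[\bar y]\big)$, again with $R$ negative; for $\lnot \dep(\dots)$ we want ``$\forall \bar x\, \lnot R\bar x$''. For conjunction $\varphi \wedge \psi$ we simply take $\psi_\varphi \wedge \psi_\psi$, which preserves negativity of $R$. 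The interesting inductive cases are disjunction and the quantifiers. For $\varphi \lor \psi$, the team splits as $X = Y \cup Z$; I would existentially quantify two new relation symbols $S, T$ (the names of $Y$ and $Z$), assert $\forall \bar x (R\bar x \to S\bar x \vee T\bar x)$ — here $R$ occurs negatively and $S, T$ positively, which is fine since $S,T$ are existentially bound — and conjoin $\psi_\varphi(S/R)$ and $\psi_\psi(T/R)$. Since $S$ and $T$ occur only negatively in those subformulas by the induction hypothesis, the overall sentence has $S,T$ occurring both positively (in the splitting clause) and negatively; this is still a legitimate $\ESO$-sentence, and crucially $R$ still occurs only negatively. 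For $\exists y\, \varphi$ with $X$ having domain $\{x_1,\dots,x_k\}$: the operation $X \mapsto X[f/y]$ takes the $k$-ary relation $\rel(X)$ to a $(k{+}1)$-ary relation $\rel(X[f/y])$ that projects onto $\rel(X)$ and is the graph of a function in the last coordinate. So I would existentially quantify a $(k{+}1)$-ary $S$, assert $\forall \bar x (R\bar x \to \exists ! y\, S\bar x y)$ and $\forall \bar x \forall y (S \bar x y \to R \bar x)$, and conjoin $\psi_\varphi(S/R)$. For $\forall y\, \varphi$: $\rel(X[M/y])$ is $\rel(X) \times M$, so quantify $(k{+}1)$-ary $S$, assert $\forall \bar x \forall y (S\bar x y \leftrightarrow R\bar x)$, and conjoin $\psi_\varphi(S/R)$; here $R$ appears on both sides of the biconditional, but the clause unfolds to $\forall \bar x \forall y ((S\bar x y \to R\bar x) \wedge (R\bar x \to S\bar x y))$, so $R$ appears both positively and negatively — this would break the invariant, so instead I would use the one-directional $\forall \bar x \forall y (R \bar x \to S \bar x y)$ together with $\forall \bar x \forall y (S \bar x y \to R \bar x)$; the second keeps $S$ constrained from above and, combined with downward closure, one checks the equivalence still goes through while $R$ stays purely negative.

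The main obstacle, and the point requiring genuine care rather than routine symbol-pushing, is exactly the maintenance of the ``$R$ occurs only negatively'' invariant through the universal-quantifier and disjunction cases. The natural encodings want to pin down the derived team exactly (using biconditionals or surjectivity-type assertions), and a biconditional $S \leftrightarrow R$ or a surjectivity clause $\forall \bar x(R\bar x \to S\bar x \vee T\bar x)$ read naively would force $R$ to occur positively somewhere. The resolution leans on Proposition~\ref{prop.down}: because dependence logic is downward closed, it suffices for the $\ESO$-sentence to assert that \emph{some} relation named by $R$ with the stated closure properties works, and one only ever needs inequalities in the ``safe'' direction — $S \subseteq$ (something built from $R$) and $R \subseteq$ (something), never forcing new elements into $R$. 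I would therefore state and verify a short lemma to the effect that in each clause one may weaken the exact-fit requirement to the downward-safe half without changing the induced relation class, so that the resulting sentence keeps $R$ negative; checking this compatibility in the $\forall$ and $\lor$ cases is the real content of the proof, everything else being straightforward translation.
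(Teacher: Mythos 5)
The paper gives no proof of this proposition (it is the standard upper-bound translation from \cite{Vaananen:2007}), and your overall strategy --- induction on $\varphi$, reading $R$ as a name for the team, and leaning on downward closure (Proposition~\ref{prop.down}) to keep $R$ negative --- is exactly the right one. Your atomic, conjunction and disjunction clauses are correct: in the disjunction case $\exists S\exists T\bigl(\forall\bar x(R\bar x\to S\bar x\vee T\bar x)\wedge\psi_\varphi(S/R)\wedge\psi_\psi(T/R)\bigr)$, the relation $R$ is indeed only negative, and the backward direction goes through by restricting $X$ to the parts landing in $S$ resp.\ $T$ and invoking downward closure.

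The genuine flaw is in both quantifier cases: the clause $\forall\bar x\forall y\,(S\bar xy\to R\bar x)$ that you include has $R$ in the \emph{consequent} of an implication, i.e.\ a \emph{positive} occurrence of $R$, so it breaks the very invariant the proposition requires. In the $\forall y$ case your ``fix'' is in fact no fix at all: the conjunction $\forall\bar x\forall y(R\bar x\to S\bar xy)\wedge\forall\bar x\forall y(S\bar xy\to R\bar x)$ is literally the unfolded biconditional you had just rejected. The correct resolution is the one your own discussion of downward closure points to but does not execute: \emph{drop the $S\to R$ direction entirely}. For $\exists y\,\varphi$ it suffices to assert $\exists S\bigl(\forall\bar x(R\bar x\to\exists y\,S\bar xy)\wedge\psi_\varphi(S/R)\bigr)$ (the $\exists!$ is also unnecessary): in the backward direction one chooses $f(s)$ to be any witness $y$ with $S(s(\bar x),y)$, so that $\rel(X[f/y])\subseteq S$, and downward closure finishes the argument even though $S$ may contain tuples whose projection is outside $R$. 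For $\forall y\,\varphi$ one uses only $\forall\bar x\forall y(R\bar x\to S\bar xy)$ (or, more simply, substitutes $R(\bar t)$ for every occurrence $S(\bar t,t)$ in $\psi_\varphi$, which preserves negativity since $S$ is negative in $\psi_\varphi$). With that correction the proof is the standard one and is complete.
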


For sentences the proposition gives that $\df \leq \ESO$ and in 
\cite{Vaananen:2007} the converse inequality was shown, hence $\df \equiv 
\ESO$.
In \cite{Kontinen:2009} the following theorem was shown, which together with 
Proposition \ref{prop23}  characterizes open $\tau$-formulas of dependence 
logic as the $R$-negative (downwards closed) fragment of $\ESO[\tau \cup 
\set{R}]$.

\begin{thm} \label{thm24}
Let $\tau$ be a signature and  $R$ a  $k$-ary relation symbol such that   
$R\notin \tau$. Then for every  $\tau\cup \set{R}$-sentence $\psi$ of $\ESO$, 
in which $R$ appears only negatively,  there is a
$\tau$-formula $\phi$ of $\df$  with free variables $x_1,\dotsc, x_k$  such 
that,  for all  $\M$ and  $X$  with domain $\set{x_1,\dotsc, x_k}$:
\begin{equation*}\label{neweq}
 \M,X\models \phi \iff (\M,\rel(X))\models\psi\vee \forall\overline{y}\neg 
 R(\overline{y}) .
\end{equation*}
\end{thm}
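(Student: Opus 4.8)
The plan is to convert an $\ESO$-sentence $\psi$ over $\tau\cup\{R\}$, in which $R$ occurs only negatively, into a dependence logic formula $\phi$ with free variables $\bar x = x_1,\dots,x_k$ that "reads" the relation $R$ off its input team. The key idea is that a dependence logic formula evaluated on a team $X$ has access to $\rel(X)$ in a downward-closed way, so we want $\phi$ to assert, roughly, "either $\rel(X)$ is empty, or $(\M,\rel(X))\models\psi$." Concretely, I would first put $\psi$ into the normal form $\psi = \exists f_1\cdots\exists f_m\,\theta$, where $\theta$ is first-order over $\tau\cup\{R\}\cup\{f_1,\dots,f_m\}$ and, without loss of generality, $R$ occurs in $\theta$ only inside subformulas $\neg R(\bar t)$. (By standard Skolemization arguments one can take the second-order quantifiers to be over functions rather than relations; one must keep track that the $R$-negative condition is preserved, which it is, since Skolemization does not touch $R$.) The free variables $\bar x$ of $\phi$ will play the role of "generic elements of $R$": the team $X$ with $\rel(X)$ as its set of value-tuples is exactly the interpretation of $R$ we must simulate.

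Next I would build $\phi$ in two stages. In the first stage, simulate the existential second-order quantifiers $\exists f_i$ by dependence logic quantifiers together with dependence atoms: introduce fresh variables and use the standard trick that $\exists \bar y\,\dep(\bar x,\bar y)\wedge(\cdots)$ expresses "there is a function of $\bar x$ such that $\cdots$". Here one has to be careful that each $f_i$ has the right arity and that the functional dependence is witnessed uniformly across the whole team — this is exactly what dependence atoms provide. In the second stage, translate the first-order matrix $\theta$. The purely $\tau$-part and the $f_i$-part translate directly using the fact (Proposition \ref{foisflat}) that first-order formulas are flat. The only delicate point is translating the negated atoms $\neg R(\bar t)$: we must express "$\bar t$ is not in $\rel(X)$", i.e., "$\bar t$ differs from $s\!\upharpoonright\!\bar x$ for every $s$ in the (relevant subteam)." The natural device is a universal quantifier ranging over the team's own tuples, i.e. something like $\forall \bar z\,(\bigvee_i z_i\neq x_i \vee \text{"}\bar z\neq\bar t\text{"})$ evaluated so that $\bar z$ sweeps out $\rel(X)$; because the quantifier $\forall$ in team semantics supplies all of $M$, one recovers the comparison with $\rel(X)$ by pairing it with the original free variables $\bar x$. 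Downward closure (Proposition \ref{prop.down}) guarantees $R$-negativity is respected: adding the disjunct $\forall\bar y\,\neg R(\bar y)$ in the statement corresponds exactly to the possibility that the split of the team forced some part to be empty.

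**The main obstacle** I anticipate is handling the interaction between the $\exists/\vee$ connectives of dependence logic — which split and manipulate the team — and the requirement that the negated $R$-atoms be evaluated against the \emph{original} input team $\rel(X)$ rather than against whatever subteam the surrounding connectives have produced. Put differently, in the translation the "database" $\rel(X)$ must be frozen at the top level and remain readable at the leaves, even though the team itself is being split, extended, and restricted as we descend through $\theta$. The standard fix is to carry the variables $\bar x$ (or copies of them made rigid via dependence atoms $\dep(c,x_i)$ for a dummy constant-like variable $c$, making them team-constant) all the way down, and to re-express $\neg R(\bar t)$ as a statement purely about how $\bar t$ relates to these carried-down values. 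Verifying that this preserves the equivalence in both directions — that every strategy in $\psi$ yields a satisfying family of subteam-splittings for $\phi$, and conversely that any such family decodes to a witnessing assignment of the $f_i$ and a model of $\theta$ over $(\M,\rel(X))$ — will require a careful induction on $\theta$, and is where most of the real work lies. The disjunct $\forall\bar y\,\neg R(\bar y)$ in the conclusion absorbs precisely the boundary case $\rel(X)=\emptyset$ (equivalently $X=\{\epsilon\}$ or $X=\emptyset$), where the simulated $R$ is empty and $\phi$ is vacuously satisfied, matching the fact that every dependence logic formula is satisfied by the empty team.
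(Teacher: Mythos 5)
The paper does not actually prove Theorem~\ref{thm24}; it quotes it from Kontinen--V\"a\"an\"anen (2009), and the closest in-text analogues are the proofs of Theorem~\ref{SO->D} and of the $\I(Q)$ theorem in Section~4. Your skeleton agrees with that source: Skolem normal form $\exists f_1\cdots\exists f_n\forall y_1\cdots\forall y_m\,\theta_0$ with $\theta_0$ quantifier-free and each $f_i$ occurring only as $f_i(\bar y^i)$, simulation of $\exists f_i$ by $\forall\bar y\,\exists z_1\cdots\exists z_n(\bigwedge_i\dep(\bar y^i,z_i)\wedge\cdots)$, and a translation of $\neg R(\bar t)$ as a comparison with the free variables $\bar x$. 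But the entire difficulty of the theorem sits in that last step, and your treatment of it has a genuine gap. The working translation of $\neg R(\bar t)$ is the flat first-order formula $\bigvee_i t_i\neq x_i$, and its correctness rests on three facts you never isolate: (a) after the normal-form step no term of the matrix mentions $\bar x$, so every other literal is constant as $s(\bar x)$ varies; (b) the matrix is evaluated on the full product team $X[M/y_1]\cdots[M/y_m][\ldots]$, so for each fixed value $\bar b$ of $\bar y$ the assignments with $s(\bar y)=\bar b$ have $s(\bar x)$ ranging over \emph{all} of $\rel(X)$, which is what turns the pointwise inequality into ``$\bar t^{\bar b}\notin\rel(X)$''; and (c) the translated matrix is first-order, hence flat by Proposition~\ref{foisflat}, so the team-splitting semantics of $\vee$ collapses to pointwise disjunction. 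Moreover, even granting all this, the translation is \emph{unsound} if $\theta_0$ contains two $R$-literals in the same disjunction: for $\neg R(\bar t_1)\vee\neg R(\bar t_2)$ the pointwise condition ``for all $\bar a\in\rel(X)$, $\bar t_1\neq\bar a$ or $\bar t_2\neq\bar a$'' is strictly weaker than ``$\bar t_1\notin\rel(X)$ or $\bar t_2\notin\rel(X)$'' (take $\rel(X)=\set{\bar t_1,\bar t_2}$), so the forward direction of the equivalence fails. The proof therefore needs a preprocessing step that reduces to a \emph{single} negative occurrence of $R$ applied to a tuple of universally quantified variables --- e.g.\ using $R$-negativity to rewrite $\psi$ as $\exists P(\forall\bar w(\neg R(\bar w)\vee P(\bar w))\wedge\psi(P/R))$, exactly parallel to the isolation of $R$ into $\forall\bar w(R(\bar w)\leftrightarrow f_1(\bar w)=f_2(\bar w))$ in Section~4 of this paper. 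Your proposal contains no such step.

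The concrete devices you do propose would not close this gap. The clause $\forall\bar z(\bigvee_i z_i\neq x_i\vee\text{``}\bar z\neq\bar t\text{''})$ is first-order and flat, hence equivalent to the bare pointwise statement $\bar t^s\neq s(\bar x)$; the extra $\forall\bar z$ sweep is vacuous and does not ``recover the comparison with $\rel(X)$'' --- only the product-team structure in (b) above does that. The suggestion to make the $x_i$ ``team-constant'' via atoms of the form $\dep(c,x_i)$ is worse than vacuous: constancy of $x_i$ across the team would collapse $\rel(X)$ to a single tuple and destroy the encoding of $R$. Finally, the disjunct $\forall\bar y\,\neg R(\bar y)$ does not ``correspond to the split of the team forcing some part to be empty''; it is needed solely for the input $X=\emptyset$, where $\M,\emptyset\models\phi$ holds unconditionally while $(\M,\emptyset)\models\psi$ may fail (and $X=\set{\epsilon}$ is not a team with domain $\set{x_1,\ldots,x_k}$, so that case does not occur). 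In short: the architecture is right, but the one idea that carries the theorem --- reducing to a single guarded $\neg R$-occurrence and exploiting the product team plus flatness to read $\rel(X)$ off pointwise --- is missing, and the substitutes offered are either redundant or incorrect.
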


\subsection{Independence logic}

Independence logic was introduced in \cite{Gradel:2011} and independently in 
\cite{Engstrom:2011} as a variant of dependence logic in which the dependence 
atoms are replaced by independence atoms $\bar x \perp_{\bar z} \bar 
y$.\footnote{In \cite{Engstrom:2011} multivalued dependence atoms were 
	introduced, denoted by $[\bar z \mathord\twoheadrightarrow \bar x | \bar 
	y]$. The semantics are very similar to the independence atoms.}  The 
semantics of these atoms are defined by:
\begin{multline*} M ,X \models  \bar y \perp_{\bar x} \bar z \text{ iff } \\
\forall s,s' \mathord\in X \Bigl( s(\bar x)= s'(\bar x)  \rightarrow \exists 
s_0 \mathord\in X\bigl(s_0(\bar x,\bar y)=s(\bar x,\bar y ) \land s_0(\bar z) 
= s'(\bar z) \bigr)
\Bigr).
\end{multline*}

The dependence atoms can easily be expressed using the independence atoms, 
implying that independence logic contains dependence logic, in fact this 
containment is proper, as seen from the lack of downwards closure.

On the other hand the analogue of Proposition \ref{prop23} holds for 
independence logic, if the restriction of $R$ appearing only negatively is 
removed. Galliani, in \cite{Galliani:2011}, showed that the also the analogue 
of Theorem \ref{thm24} holds with the same modification, i.e., the open 
$\tau$-formulas of independence logic corresponds exactly to $\tau \cup 
\set{R}$-sentences of ESO.

\subsection{D(Q)}

The notion of a generalized quantifier goes back to
Mostowski \cite{Mostowski:1957} and Lindstr\"om \cite{Lindstrom:1966}. In a 
recent
paper \cite{Engstrom:2011} Engstr\"om introduced semantics for generalized
quantifiers in the framework of dependence logic. We will review the
definitions here.

Let $Q$ be a quantifier of type $\langle k \rangle$, meaning
that $Q$ is a class of $\tau$-structures, where the signature $\tau$ has a
single $k$-ary relational symbol.  Also, assume that $Q$ is monotone 
increasing, i.e.,  for every $M$ and every $A \subseteq B \subseteq M^k$, if 
$A \in Q_M$ then also $B \in Q_M$.
An assignment $s$ satisfies a formula
$Q\bar x\, \varphi$ in the structure $\mathbb M$, written $\mathbb M,s \models 
Qx\,\varphi$, if the set $\set{\bar a \in M^k | \mathbb M,s[\bar a /\bar x] 
	\models \varphi}$ is in $Q_M$, where $Q_M= \set{R \subseteq M^k | (M,R) \in 
	Q}$.
% and $s[\bar a/\bar x]$ is the assignment whose domain is the domain of $s$ 
% union $\set{\bar x}$ and such that
%$$
%s: y \mapsto \begin{cases}
%a_i \text{ if $y$ is $x_i$,}\\
%s(y) \text{ otherwise.}
%\end{cases}
%$$

In the context of teams we say that a team $X$ satisfies a formula $Q\bar x\, 
\varphi$,
\begin{equation}\label{def_q}
\mathbb M ,X \models  Q\bar x\, \varphi \text{, if there exists } F: X \to Q_M
\text{ such that } \mathbb M,{X[F/\bar x]} \models \varphi,
\end{equation}
where $X[F/\bar x] = \set{s[\bar a /\bar x] | \bar a \in F(s)}$.  Note that 
this definition works well only with monotone (increasing) quantifiers, see 
\cite{Engstrom:2011} for details.

Let $D(Q)$ be dependence logic extended with the generalized quantifier $Q$ 
with semantics as defined in \eqref{def_q}.

The following easy proposition suggests that we indeed have the right truth 
condition for monotone quantifiers:

\begin{prop}\label{prop:4} \begin{enumerate}[(i)]
\item\label{en1} $D(Q)$ is downwards closed.
\item\label{en2} $D(Q)$ is local, in the sense that $\M,X \models \varphi$ iff 
	$\M,(X \mathbin\upharpoonright \FV(\varphi)) \models \varphi$.
\item\label{en3} Viewing $\exists$ and $\forall$ as generalized quantifiers of 
	type $\langle 1 \rangle$, the truth conditions in \eqref{def_q} are 
	equivalent to the truth conditions of dependence logic.
\item\label{en4} For $\FO(Q)$-formulas $\varphi$ and teams $X$, $\mathbb M ,X 
	\models  \varphi$ iff
for all $s \in X: \mathbb M ,s \models  \varphi$.
\item\label{en5} For every $\df(Q)$ formula $\varphi$ we have $\M,\emptyset 
	\models \varphi$.
\end{enumerate}
\end{prop}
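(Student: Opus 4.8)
The plan is to establish the five parts of Proposition~\ref{prop:4} by a single simultaneous induction on the structure of $\df(Q)$-formulas, since the statements reinforce one another: downwards closure (i) is needed to make the disjunction and quantifier clauses go through, locality (ii) is needed whenever we split a team or add a variable to its domain, and the empty-team property (v) is the base case that keeps the disjunction and quantifier steps honest. So first I would set up the induction, handling the atomic cases (first-order atoms and dependence atoms, plus their negations) directly from the definition of the satisfaction relation; for these (i), (ii), and (v) are immediate, and (iv) for first-order atoms is literally clause~(1) of the semantics.

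For part~\eqref{en3}, I would simply unfold the definition~\eqref{def_q} for $Q=\exists$ and $Q=\forall$ viewed as type-$\langle 1\rangle$ quantifiers. For $\exists$: $\exists_M$ consists of the nonempty subsets of $M$, so a function $F\colon X\to\exists_M$ is a choice of nonempty subset for each $s$, and $X[F/y]=\set{s[a/y]\mid a\in F(s)}$; downwards closure of the inductive hypothesis lets me shrink each $F(s)$ to a singleton $\set{f(s)}$, recovering exactly the clause $\M,X[f/y]\models\varphi$. For $\forall$: $\forall_M=\set{M}$, so the only available $F$ is the constant function $s\mapsto M$, giving $X[F/y]=X[M/y]$ as required. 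This is the part where the monotonicity hypothesis on $Q$ earns its keep, and it is worth being explicit that the reduction uses exactly the downwards-closure clause~(i) proved in parallel.

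For the inductive step on the connectives and the generalized quantifier I would argue as follows. Locality~(ii): in the $\lor$-case, if $X=Y\cup Z$ witnesses satisfaction then $(Y\upharpoonright\FV)\cup(Z\upharpoonright\FV)=X\upharpoonright\FV(\varphi\lor\psi)$ and the induction hypothesis transfers satisfaction between $Y$ and $Y\upharpoonright\FV(\varphi)$, and likewise for $Z$; the $Q\bar x$-case is similar, noting that $\FV(Q\bar x\,\varphi)=\FV(\varphi)\setminus\set{\bar x}$ and that restricting commutes appropriately with the operation $X\mapsto X[F/\bar x]$. Downwards closure~(i): for $Y\subseteq X$ and $\varphi\lor\psi$ with witnesses $Y',Z'$, take $Y'\cap Y$ and $Z'\cap Y$; for $Q\bar x\,\varphi$ with witness $F\colon X\to Q_M$, restrict $F$ to $Y$ and note $Y[F\!\restriction\!Y/\bar x]\subseteq X[F/\bar x]$, then apply the induction hypothesis. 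Empty-team property~(v): if $X=\emptyset$ then $\emptyset=\emptyset\cup\emptyset$ handles $\lor$, and the empty function witnesses $Q\bar x\,\varphi$ since $\emptyset[F/\bar x]=\emptyset$; the $\land$ and single-variable quantifier cases are immediate. Part~\eqref{en4} is proved by induction only over the $\FO(Q)$-fragment (no dependence atoms): the atomic and Boolean cases are routine, and for $Q\bar x\,\varphi$ one uses the fact that over $\FO(Q)$ satisfaction is "flat", so a witnessing $F\colon X\to Q_M$ can be chosen pointwise from the single-assignment satisfaction condition, and conversely.

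The main obstacle, such as it is, is bookkeeping rather than depth: one must be careful in the $Q\bar x$-clause that the three properties interact correctly, in particular that the function $F$ witnessing satisfaction on a team can be cut down to a subteam (for~(i)) and pushed through a restriction of the domain (for~(ii)) without changing $\FV(\varphi)$-relevant data, and that in~\eqref{en3} the equivalence between the team-semantics clause with arbitrary $F\colon X\to\exists_M$ and the dependence-logic clause with a function $f\colon X\to M$ genuinely requires the downwards-closure part~(i) as an induction hypothesis applied to the immediate subformula. None of the steps requires a new idea beyond unwinding~\eqref{def_q}.
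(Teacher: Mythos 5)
Your proposal is correct and follows essentially the same route as the paper, which proves (i), (ii), (iv), and (v) by induction on the structure of $\varphi$ and derives (iii) from downwards closure (i). The details you supply — cutting the witness $F$ down to a subteam for (i), shrinking each $F(s)$ to a singleton via (i) for the $\exists$ case of (iii), and using monotonicity of $Q$ together with flatness of the subformula for (iv) — are exactly the intended ones.
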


The proofs of \eqref{en1}, \eqref{en2}, \eqref{en4}, and \eqref{en5} are easy 
inductions on the construction of $\varphi$, and \eqref{en3} is proved by 
using \eqref{en1}.

\subsection{$\ESO(Q)$}

We denote by $\ESO$ the existential fragment of second-order logic. The 
extension, $\ESO(Q)$,  of  $\ESO$ by a  generalized quantifier $Q$ is defined 
as follows. 
%However, of reasons that will be apparent later we demand that negation only 
%appears in front of atomic formulas:
\begin{defin}
The formulas of $\ESO(Q)$ are built up recursively from atomic and negated 
atomic formulas with  conjuction, disjunction, first-order existential and 
universal quantification, $Q$ quantification, and second-order existential  
relational and functional  quantification.
\end{defin}

A quantifier $Q$ is  \emph{definable} in $\ESO$ if $Q$ is the class of models 
of some $\ESO$-sentence $\phi$, i.e., \[Q=\Mod(\phi).\]

Note that if for every $M$, $\emptyset \in Q_M$ and $M \notin Q_M$ then we can 
use $Q$ to simulate the classical negation, and thus full second-order logic 
is contained in $\ESO(Q)$.  However, if we restrict to monotone (increasing) 
quantifiers we get the following result as in first-order logic:

\begin{prop} Let $Q$ be a monotone quantifier. Then $Q$ is definable in $\ESO$ 
	iff $\ESO(Q)\equiv \ESO$.
\end{prop}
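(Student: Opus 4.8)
The plan is to prove both directions of the equivalence, the right-to-left direction being essentially trivial and the left-to-right direction requiring a definability argument that exploits monotonicity.

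\textbf{The easy direction.} Suppose $Q$ is definable in $\ESO$, say $Q = \Mod(\phi_Q)$ for an $\ESO$-sentence $\phi_Q$ in the vocabulary $\set{S}$ with $S$ a $k$-ary relation symbol. Given any $\ESO(Q)$-sentence $\theta$, I would translate it into a plain $\ESO$-sentence by induction on the construction of $\theta$, the only nontrivial case being a subformula of the form $Q\bar x\,\psi(\bar x,\bar y)$. Here I replace $Q\bar x\,\psi$ by $\exists S\,\bigl(\phi_Q(S) \wedge \forall \bar x\,(S(\bar x) \to \psi(\bar x,\bar y))\bigr)$. Since $Q$ is monotone increasing, $\set{\bar a : \psi(\bar a,\bar b)\text{ holds}} \in Q_M$ if and only if there is some $S \in Q_M$ contained in it, which is exactly what the displayed formula asserts. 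The second-order quantifier $\exists S$ is legal in $\ESO$, and since $\phi_Q$ is itself an $\ESO$-sentence and we only prepend existential second-order quantifiers and first-order quantifiers, prenexing keeps us in $\ESO$. Doing this bottom-up shows $\ESO(Q) \leq \ESO$, and the reverse inclusion $\ESO \leq \ESO(Q)$ is immediate since $\ESO$ is syntactically contained in $\ESO(Q)$; hence $\ESO(Q) \equiv \ESO$.

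\textbf{The harder direction.} Conversely, assume $\ESO(Q) \equiv \ESO$. I want to produce an $\ESO$-sentence defining $Q$. Consider the vocabulary $\set{S}$ with $S$ a $k$-ary relation symbol, and the $\ESO(Q)$-sentence $\sigma := Q\bar x\, S(\bar x)$. By construction $\M \models \sigma$ iff $\set{\bar a : \bar a \in S^{\M}} \in Q_M$ iff $S^\M \in Q_M$ iff $\M \in Q$ (reading $\M$ as an $\set{S}$-structure). Thus $Q = \Mod(\sigma)$ with $\sigma \in \ESO(Q)$. By the assumption $\ESO(Q) \equiv \ESO$, there is an $\ESO[\set{S}]$-sentence $\sigma'$ equivalent to $\sigma$ over all $\set{S}$-structures, and then $Q = \Mod(\sigma') $ with $\sigma' \in \ESO$, i.e., $Q$ is definable in $\ESO$.

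\textbf{Where the subtlety lies.} The right-to-left direction is where monotonicity is genuinely used and where one must be careful: for a non-monotone $Q$ the substitution $\exists S(\phi_Q(S)\wedge \forall\bar x(S(\bar x)\to\psi))$ is simply wrong, which is consistent with the remark preceding the proposition that a non-monotone $Q$ with $\emptyset \in Q_M$, $M \notin Q_M$ lets one define all of second-order logic inside $\ESO(Q)$. I should also double-check one bookkeeping point in that direction: $\psi$ may contain further occurrences of $Q$ and free variables $\bar y$ other than $\bar x$; the induction handles nested $Q$'s by processing innermost first, and the free variables $\bar y$ simply ride along as parameters in the translated formula, so no issue arises. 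The left-to-right direction needs essentially nothing beyond writing down the sentence $Q\bar x\, S(\bar x)$ and invoking the hypothesis, so I expect the write-up to be short; the main care is just in stating the monotonicity equivalence cleanly in the easy direction.
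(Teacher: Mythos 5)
Your proposal is correct and follows essentially the same route as the paper: the left-to-right direction uses the identical substitution $\exists S(\phi_Q(S)\wedge\forall\bar x(S(\bar x)\to\psi))$, whose single positive occurrence of $\psi$ (made possible by monotonicity) keeps the translation inside $\ESO$, and the right-to-left direction observes that $Q$ is axiomatized in $\ESO(Q)$ by $Q\bar x\,S(\bar x)$. The paper's write-up is just terser, stating the second direction as ``$Q$ is trivially axiomatizable in $\ESO(Q)$'' without displaying the sentence.
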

\begin{proof} Since the model class $Q$ is trivially axiomatizable in 
	$\ESO(Q)$, non-definability of $Q$ in $\ESO$ implies that $\ESO(Q)> \ESO$.  
	Assume then that  $Q$ is definable in $\ESO$ and let $\set{R}$ be the 
	vocabulary of $Q$, where $R$ is $k$-ary.  By the assumption, there is 
	$\phi\in \ESO$ such that $\Mod(\phi)=Q$. The idea is now to use the sentence 
	$\phi$ as a uniform definition of $Q$ using substitution. The problem is 
	that there might be negative occurrences of $R$ in $\phi$. By using the 
	monotonicity of $Q$, this problem can be avoided. Define $\psi$ as follows:
\[  \exists P ( \phi(P/R)\wedge \forall \overline{x}( 
	P(\overline{x})\rightarrow R(\overline{x})) ) .     \]
By the monotonicity of $Q$, the sentence $\psi$ also defines $Q$ and it  only 
has one positive occurrence of $R$. We can now compositionally translate 
formulas of $\ESO(Q)$ into the logic $\ESO$, the clause for  $Q$ being the 
only non-trivial one:
\[ Q\overline{x}\theta \rightsquigarrow \psi(\theta/R),    \]
where   $\psi(\theta/R)$  arises by substituting the unique subformula 
$R(\overline{x})$ of $\psi$ by $\theta(\overline{x})$.
\end{proof}
The next example shows that it is easy to find monotone quantifiers which are 
not $\ESO$-definable.

\begin{example} Let $S\subseteq \mathbb{N}$. Then the following quantifiers of 
	type $\langle 1 \rangle$  are monotone:
\begin{eqnarray*}
Q_1 &=& \set{ (M,X) : \vert M\vert \textrm{ finite, and } \emptyset \neq 
	X\subseteq M }\\
Q_S & = & \set{(M,X)  : \vert M\vert \in S \textrm{ and } X=M   } \cup 
\set{(M,X)  :  \vert M\vert \notin S \textrm{ and } X\neq \emptyset  }
\end{eqnarray*}
\end{example}
By, compactness of $\ESO$, $Q_1$ is not definable in $\ESO$. Furthermore, for 
only countably many $S$, the quantifier $Q_S$   is $\ESO$-definable. 

%Examples of monotone $Q$s not ESO definable. 

%\section{Existential second order logic with generalized quantifiers}
\section{The equivalence of $\df(Q)$ and $\ESO(Q)$}

In this section we consider monotone increasing quantifiers $Q$ satisfying two 
non-triviality assumptions:  $(M,\emptyset)\notin Q$ and  $(M,M^k)\in Q$ for 
all $M$.
We show that, for sentences, the logics $\df(Q)$ and $\ESO(Q)$ are equivalent.
% whenever $Q$ is a monotone increasing quantifier of type $\langle k 
% \rangle$.

\subsection{A normal form for $\ESO(Q)$}\label{S3.1}

\begin{defin}\label{SNF}
A formula of $\ESO(Q)$ is in \emph{normal form} if it is of the form $\exists 
f_1 \ldots \exists f_k\, \varphi$ and $\varphi$ is a $\FO(Q)$-sentence in 
prenex normal form without existential quantifiers.  \end{defin}
Thus an $\ESO(Q)$ formula is in normal form if it can be written as:
$$
\exists f_1\cdots f_n Q_1' x_1\cdots Q_m' x_m\psi,
$$
where $Q'_i\in \set{ Q,\forall}$ and $\psi$ is a quantifier-free formula. 
% \marginpar{Should $\varphi$ be in the strict normal form with every function 
% symbol occuring only once, etc etc?}
%It is clear that for $\ESO(\emptyset)$ this Skolem normal form coincides with 
%the usual Skolem normal form (with the extra %assumption that the quantifier 
%free part is in negated normal form).
In order to show that every formula of $\ESO(Q)$ can be transformed into this 
normal form, we need the following lemma.

\begin{lemma}\label{connectives}  Then the following equivalences hold
\begin{itemize}
\item $Q\overline{x}(\psi\vee \phi) \equiv Q\overline{x}\psi\vee \phi, $
\item $Q\overline{x}(\psi\wedge \phi) \equiv Q\overline{x}\psi\wedge \phi, $
\end{itemize}
where the variables $\overline{x}$ do not appear free in $\phi$.
\end{lemma}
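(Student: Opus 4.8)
The plan is to argue purely semantically, using that $\ESO(Q)$ carries the usual Tarskian semantics together with the Lindström clause
\[
\M,s\models Q\bar x\,\theta\quad\text{iff}\quad \set{\bar a\in M^k : \M,s[\bar a/\bar x]\models\theta}\in Q_M .
\]
So it suffices to fix an arbitrary model $\M$ and an assignment $s$ defined on the relevant free variables and to check, for each of the two equivalences, that $\M,s$ satisfies the left-hand side iff it satisfies the right-hand side. The one observation driving everything is that, since $\bar x$ does not occur free in $\phi$, the truth value of $\phi$ at $s[\bar a/\bar x]$ is independent of $\bar a$; hence, writing $B_\psi=\set{\bar a\in M^k:\M,s[\bar a/\bar x]\models\psi}$ and $B_\phi=\set{\bar a\in M^k:\M,s[\bar a/\bar x]\models\phi}$, we have $B_\phi=M^k$ when $\M,s\models\phi$ and $B_\phi=\emptyset$ otherwise, while $\M,s\models Q\bar x\,\psi$ iff $B_\psi\in Q_M$.

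For the disjunction, $\set{\bar a:\M,s[\bar a/\bar x]\models\psi\vee\phi}=B_\psi\cup B_\phi$, so $\M,s\models Q\bar x(\psi\vee\phi)$ iff $B_\psi\cup B_\phi\in Q_M$. If $\M,s\models\phi$ then $B_\psi\cup B_\phi=M^k\in Q_M$ by the standing assumption $(M,M^k)\in Q$, so the left side holds; and the right side $Q\bar x\,\psi\vee\phi$ holds because its second disjunct does. If $\M,s\not\models\phi$ then $B_\psi\cup B_\phi=B_\psi$, and both sides reduce to $B_\psi\in Q_M$ (on the right, the $\phi$-disjunct is false, so the disjunction collapses to $Q\bar x\,\psi$). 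In either case the two sides agree.

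For the conjunction, $\set{\bar a:\M,s[\bar a/\bar x]\models\psi\wedge\phi}=B_\psi\cap B_\phi$, so $\M,s\models Q\bar x(\psi\wedge\phi)$ iff $B_\psi\cap B_\phi\in Q_M$. If $\M,s\models\phi$ then $B_\psi\cap B_\phi=B_\psi$, so $\M,s\models Q\bar x(\psi\wedge\phi)$ iff $B_\psi\in Q_M$ iff $\M,s\models Q\bar x\,\psi$, and since $\phi$ also holds this is precisely $\M,s\models Q\bar x\,\psi\wedge\phi$. If $\M,s\not\models\phi$ then $B_\psi\cap B_\phi=\emptyset\notin Q_M$ by the standing assumption $(M,\emptyset)\notin Q$, so the left side fails; and the right side fails too, since its conjunct $\phi$ does. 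This exhausts the cases.

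There is no genuine obstacle here; the only point requiring attention is to invoke the correct non-triviality hypothesis in each branch — $(M,M^k)\in Q$ for the disjunction and $(M,\emptyset)\notin Q$ for the conjunction — both of which are in force throughout this section, while monotonicity of $Q$ plays no role in this particular lemma. One should also remember that $\bar x$ is a $k$-tuple matching the type of $Q$; since the same tuple is bound on both sides of each equivalence, this causes no trouble.
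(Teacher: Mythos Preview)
Your proof is correct; the paper states this lemma without proof, and your direct semantic case analysis (splitting on whether $\M,s\models\phi$ and invoking the non-triviality assumptions $(M,M^k)\in Q$ and $(M,\emptyset)\notin Q$ in force in this section) is exactly the routine verification the authors are omitting. Your remark that monotonicity is not needed here is also accurate.
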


\begin{prop}\label{ESO(Q)_NF}
Every sentence of $\ESO(Q)$ can be written in the normal form of Definition 
\ref{SNF}.
\end{prop}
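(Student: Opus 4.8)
The plan is to reduce an arbitrary sentence of $\ESO(Q)$ to the shape required in Definition~\ref{SNF} in three stages: first make the sentence prenex, then trade existential relation quantifiers for existential function quantifiers, and finally Skolemize away every existential quantifier that is not already at the front, collecting the resulting function quantifiers into the leading block. The first stage is standard: after renaming bound first-order variables and bound relation and function symbols so that no clashes occur, one pulls every quantifier outward past the connectives $\wedge$ and $\vee$. For $\exists$, $\forall$ and the second-order existential quantifiers these are the usual first-order prenex laws (for instance $(\exists R\,\psi)\vee\phi \equiv \exists R\,(\psi\vee\phi)$ when $R$ is not free in $\phi$); for $Q$ this is exactly Lemma~\ref{connectives}. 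The sentence thereby takes the form $\mathcal{Q}_1 v_1 \cdots \mathcal{Q}_\ell v_\ell\,\psi$ with $\psi$ quantifier-free, each $\mathcal{Q}_i$ being $\exists$, $\forall$, $Q$, an existential relation quantifier, or an existential function quantifier.

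For the second stage I would invoke the standard coding of a $k$-ary relation by a $k$-ary function (together with a pair of auxiliary existentially quantified constants): each existential relation quantifier $\exists R$ becomes $\exists f$, and every atom $R(\bar t)$ is replaced by a quantifier-free formula so that, as $f$ ranges over all functions, the definable sets range over all $k$-ary relations. This is fine over structures with at least two elements; the one-element universe is a minor wrinkle, settled by a routine correction since there both $\ESO(Q)$ and the normal-form fragment trivialise. It is also convenient to regard a first-order existential quantifier $\exists y$ as quantification over a nullary function symbol, so that from this point on the only ``existential'' quantifiers present are second-order function quantifiers, possibly of arity $0$; the substitutions above keep $\psi$ quantifier-free and change the prefix only by turning relation quantifiers into function quantifiers.

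The third stage moves all existential function quantifiers to the front. Scanning the prefix from left to right, whenever an $\exists f$ stands in the scope of the universal-type quantifiers ($\forall$ and $Q$) binding the tuple $\bar u$, I delete $\exists f$, replace each term $f(\bar t)$ throughout the matrix by $g(\bar u,\bar t)$ for a fresh function symbol $g$, and prepend $\exists g$ to the whole sentence. Soundness across a $\forall$ is the classical Skolem equivalence. The one genuinely new point is soundness across $Q$, namely
\[
  Q\bar x\,\exists f\,\theta \ \equiv\ \exists g\,Q\bar x\,\theta',
\]
where $\theta'$ arises from $\theta$ by replacing each $f(\bar t)$ with $g(\bar x,\bar t)$: from left to right one picks, for each $\bar a$ in the set $\{\bar a : \exists f\,\theta(\bar a,f)\} \in Q_M$, a witnessing $f$ and amalgamates these choices into $g$; conversely, if $g$ witnesses the right-hand side then $\{\bar a : \theta'(\bar a)\} \in Q_M$, and since this set is contained in $\{\bar a : \exists f\,\theta(\bar a,f)\}$ the latter lies in $Q_M$ by monotonicity of $Q$. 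Iterating the step removes every existential quantifier; the original function quantifiers together with the new Skolem functions end up at the front, and the remaining prefix consists solely of $\forall$'s and $Q$'s, which is precisely the normal form of Definition~\ref{SNF}.

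The bulk of this is routine bookkeeping: prenexing and the relation-to-function coding carry no surprises, and the first-order $\exists$ quantifiers are just the nullary case of the function quantifiers. The one place where the hypotheses on $Q$ are actually used, and the step I expect to be the main obstacle to get right, is the displayed Skolem equivalence across $Q$: monotonicity of $Q$ is exactly what makes its right-to-left direction valid. Beyond that, the only thing demanding care is the bookkeeping of arities of the Skolem functions when blocks mixing $\forall$ and $Q$ precede an existential quantifier.
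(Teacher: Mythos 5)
Your proposal is correct and follows essentially the same route as the paper: the connectives are handled by Lemma~\ref{connectives}, and the crux is the same equivalence $Q\overline{x}\,\exists f\,\varphi \equiv \exists g\, Q\overline{x}\,\psi$ with $f(\bar t)$ replaced by $g(\overline{x},\bar t)$, which the paper states as an observation and you additionally verify, correctly isolating monotonicity of $Q$ as what makes the right-to-left direction go through. The only difference is presentational (a staged prenex/Skolemization pass rather than a single induction on $\phi$), plus your explicit treatment of the relation-to-function coding, which the paper delegates to the standard $\ESO$ argument.
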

\begin{proof} The claim is proved using induction on $\phi$. The proof is 
	analogous to the corresponding proof for $\ESO$ (see e.g., Lemma 6.12 in 
	\cite{Vaananen:2007}). The cases of conjunction and disjunction are proved 
	using Lemma  \ref{connectives}.  The case corresponding to $Q$ is analogous 
	to the case of the universal quantifier using the observation that a formula 
	of the form  $Q \overline{x} \exists f \varphi$ is equivalent to $\exists g 
	Q \overline{x}\psi$, where $\psi$ arises from  $\varphi$ by replacing  terms 
	$f(t_1,\ldots,t_k)$ by $g(\overline{x},t_1,\ldots, t_k)$.
\end{proof}

\subsection{The main result}
We will first show a compositional translation mapping formulas of $\df(Q)$ 
into sentences of $\ESO(Q)$. This is analogous to the translation from $\df$ 
into $\ESO$ of Proposition \ref{prop23}. 

\begin{prop}\label{D(Q)toESO(Q)}
Let $\tau$ be a vocabulary and $\phi$ a $\df(Q)[\tau]$-formula with free 
variables $x_1,\dots, x_k$. Then
there is a $\tau\cup \set{R}$-sentence $\psi$ of $\ESO(Q)$, in which $R$ 
appears only negatively,  such that for all models $ \mathbb{M}$ and teams $X$ 
with domain $\set{x_1,\dotsc, x_k}$:
\[\mathbb{M},X\models \phi \iff (\mathbb{M},\rel(X))\models\psi(R) . \]
\end{prop}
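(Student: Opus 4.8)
The plan is to mimic the proof of Proposition~\ref{prop23} (the analogous statement for $\df$ and $\ESO$), proceeding by induction on the structure of the $\df(Q)[\tau]$-formula $\phi$. The induction hypothesis is exactly the statement: to each $\df(Q)$-formula $\phi$ with free variables among $x_1,\dots,x_k$ we associate an $\ESO(Q)$-sentence $\psi_\phi(R)$ over $\tau\cup\{R\}$, with $R$ a $k$-ary symbol occurring only negatively, such that $\M,X\models\phi$ iff $(\M,\rel(X))\models\psi_\phi(R)$ for every $\M$ and every team $X$ with domain $\{x_1,\dots,x_k\}$. Throughout, $R$ plays the role of $\rel(X)$, so that the "free variable" $x_i$ of the team world is encoded by the $i$-th coordinate of $R$ in the second-order world.

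I would handle the base cases first. For a first-order atomic or negated atomic $\psi$, Proposition~\ref{foisflat} (or \ref{prop:4}\eqref{en4}) says $\M,X\models\psi$ iff every $s\in X$ satisfies $\psi$, so the translation is $\forall\bar y\,(R(\bar y)\to\psi(\bar y))$, where $\bar y$ substitutes for $x_1,\dots,x_k$; here $R$ occurs once, negatively. For a dependence atom $\dep(t_1,\dots,t_{n+1})$ the standard translation $\forall\bar y\,\forall\bar y'\,((R(\bar y)\wedge R(\bar y'))\to((\,\bigwedge_i t_i(\bar y)=t_i(\bar y'))\to t_{n+1}(\bar y)=t_{n+1}(\bar y')))$ works and keeps $R$ negative; for its negation, $\lnot\dep(\cdots)$ holds iff $X=\emptyset$, translated by $\forall\bar y\,\lnot R(\bar y)$, again $R$-negative. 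For the connectives: conjunction is conjunction of the translations (both retain $R$-negativity). For disjunction $\phi=\phi_1\vee\phi_2$, the team splits $X=Y\cup Z$; following Proposition~\ref{prop23}'s proof one guesses a unary (more precisely $k$-ary) relation $S\subseteq M^k$ picking out $Y$, uses $R$ to bound it and $R\setminus S$ implicitly for $Z$, and writes $\exists S\,(\psi_{\phi_1}(S)\wedge\psi_{\phi_2}(S')\wedge\forall\bar y\,(R(\bar y)\to(S(\bar y)\vee S'(\bar y))))$ with $S'$ a second guessed relation and a clause $\forall\bar y((S(\bar y)\vee S'(\bar y))\to R(\bar y))$; by downward closure (Proposition~\ref{prop.down}) it suffices that the union covers $R$, and all occurrences of $R$ remain negative since $R$ only appears in the bounding implications on the left. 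The quantifier cases $\exists y$ and $\forall y$ are exactly as in the $\df$-to-$\ESO$ translation, introducing a guessed Skolem function for $\exists y$ and an extra universally quantified coordinate for $\forall y$, adjusting $R$'s arity accordingly.

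The genuinely new case, and the one I expect to be the main obstacle, is the generalized quantifier $Q\bar x\,\theta$. Recall $\M,X\models Q\bar x\,\theta$ iff there is $F\colon X\to Q_M$ with $\M,X[F/\bar x]\models\theta$, where $X[F/\bar x]=\{s[\bar a/\bar x]:s\in X,\ \bar a\in F(s)\}$. I would encode $F$ by a relation $G$ of arity $k+|\bar x|$ (coordinates for the old team domain plus the new $\bar x$), and $X[F/\bar x]$ is then encoded by a relation $R'$ of the same arity with $R'(\bar y,\bar x)\leftrightarrow R(\bar y)\wedge G(\bar y,\bar x)$. The translation becomes $\exists G\,\big(\Phi\wedge\psi_\theta(R'/R)\big)$, where $\Phi$ expresses "for every $\bar y$ with $R(\bar y)$, the section $\{\bar x:G(\bar y,\bar x)\}$ lies in $Q$", written using the quantifier $Q$ itself: $\Phi := \forall\bar y\,(R(\bar y)\to Q\bar x\,G(\bar y,\bar x))$. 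Monotonicity of $Q$ and downward closure of $\df(Q)$ (Proposition~\ref{prop:4}) are what make this work: because $R'(\bar y,\bar x)$ implies $R(\bar y)$, the subformula $\psi_\theta(R'/R)$ keeps $R$ negative (as $R$ enters $R'$ only positively inside a formula where $R'$ occurs negatively, composing to a negative occurrence of $R$), and because the definition of $\df(Q)$-satisfaction only requires $F(s)\in Q_M$ rather than equality, we may take $F(s)$ to be exactly the section, so the back-and-forth between $F$ and $G$ is faithful. The delicate bookkeeping will be (a) verifying that $R$ truly occurs only negatively after substituting $R'$ — this needs a small lemma that substituting a positive-in-$R$ formula for a negatively-occurring atom preserves negativity — and (b) checking that $\Phi$, which contains a $Q$-quantifier nested under $\forall\bar y$, is legitimately in $\ESO(Q)$ and that its semantics matches the "$F\colon X\to Q_M$" clause pointwise over the assignments in $X$. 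Once the $Q$-case is in place, the induction closes, and specializing to sentences ($k=0$) recovers $\df(Q)\leq\ESO(Q)$ as a corollary.
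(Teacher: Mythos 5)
Your proposal takes essentially the same route as the paper: the paper also proceeds by induction, refers all the standard cases back to Proposition~\ref{prop23}, and treats $Q\bar y\,\theta$ as the only new case, translating it as $\exists P\bigl(\theta^*(P)\wedge \forall\bar x(R(\bar x)\rightarrow Q\bar y\,P(\bar x,\bar y))\bigr)$ --- which is your $G$/$R'$ construction up to whether one explicitly intersects the guessed relation with $R$ or lets downward closure (Proposition~\ref{prop.down}) absorb the difference. One small correction: in your disjunction case the extra clause $\forall\bar y\bigl((S(\bar y)\vee S'(\bar y))\rightarrow R(\bar y)\bigr)$ must be dropped, since it would make $R$ occur positively; as you yourself observe, downward closure renders it unnecessary.
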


\begin{proof} The claim is proved using induction on $\phi$. It suffices to 
	define a translation for  $Q \bar y\, \theta(\bar x,\bar y) $, since the 
	other cases are translated analogously to  Proposition \ref{prop23}:
%\[ Q y \theta \rightsquigarrow \exists P( \theta^*(P/R)\wedge \forall \bar x 
%(\exists yP(\bar x,y)\rightarrow         Qy\,P( \bar x,y)),   \] 
\[ Q \bar y \, \theta \rightsquigarrow \exists P\bigl( \theta^*(P)\wedge 
	\forall \bar x (R(\bar x)\rightarrow Q\bar y\,P( \bar x,\bar y))\bigr),   \]  
where $\theta^*$ is the translation for $\theta$ given by the induction 
assumption.  \end{proof}

Next we show that, for sentences, Proposition \ref{D(Q)toESO(Q)} can be 
reversed, and thus the following holds.
\begin{thm}\label{SO->D} $\ESO(Q) \equiv \df(Q)$.
\end{thm}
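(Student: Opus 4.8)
The statement is an equivalence of two logics \emph{for sentences}, and I would prove the two inclusions separately. The inclusion $\df(Q)\le\ESO(Q)$ is immediate from Proposition~\ref{D(Q)toESO(Q)}: applying it to a $\df(Q)[\tau]$-sentence $\phi$ (so $k=0$) gives a $\tau\cup\set R$-sentence $\psi$ of $\ESO(Q)$ in which the \emph{nullary} symbol $R$ occurs only negatively and with $\M,\set\epsilon\models\phi$ iff $(\M,\rel(\set\epsilon))\models\psi$; since the only nonempty team with empty domain is $\set\epsilon$ and $\rel(\set\epsilon)$ interprets $R$ as ``true'', substituting a first-order sentence that is false in every structure for each (negated) occurrence of $R$ turns $\psi$ into a plain $\ESO(Q)[\tau]$-sentence equivalent to $\phi$ over all $\M$. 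So the content is the converse, which simultaneously shows that Proposition~\ref{D(Q)toESO(Q)} can be reversed for sentences.

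For $\ESO(Q)\le\df(Q)$, let $\sigma$ be an $\ESO(Q)[\tau]$-sentence. By Proposition~\ref{ESO(Q)_NF} I may assume $\sigma=\exists f_1\cdots\exists f_n\,Q_1'x_1\cdots Q_m'x_m\,\psi$ with $Q_i'\in\set{Q,\forall}$ and $\psi$ quantifier-free; strengthening the normal form exactly as for $\ESO\le\df$ in \cite{Vaananen:2007} (e.g.\ by first replacing the function quantifiers by relational ones and re-Skolemising, so that each new function symbol is applied only to the tuple of variables governing it, or by duplicating a multiply applied function symbol together with first-order consistency conditions on fresh universally quantified variables) I may further assume that each $f_i$ occurs in $\psi$ only in terms $f_i(\bar z_i)$ for one fixed tuple $\bar z_i$ of variables among $x_1,\dots,x_m$. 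I then translate $\sigma$ to the $\df(Q)[\tau]$-sentence
$$\sigma^*:=Q_1'x_1\cdots Q_m'x_m\,\exists y_1\cdots\exists y_n\Bigl(\bigwedge_{i=1}^{n}\dep(\bar z_i,y_i)\wedge\psi'\Bigr),$$
where $\psi'$ is $\psi$ with every $f_i(\bar z_i)$ replaced by the fresh variable $y_i$, and I claim $\M\models\sigma$ iff $\M\models\sigma^*$ for every $\M$; together with the first inclusion this proves the theorem.

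The claim is proved by induction on the length of the quantifier prefix, relating the team semantics of $Q_1'x_1\cdots Q_m'x_m$ to the single-assignment (Lindstr\"om--Mostowski) semantics of the very same prefix used to evaluate the $\FO(Q)$-part of $\sigma$; the base of the induction uses flatness of quantifier-free first-order formulas (Proposition~\ref{foisflat}). Going from $\sigma$ to $\sigma^*$: from global witnesses $f_1,\dots,f_n$ one unwinds $\sigma^*$, choosing at each $Q$-step the ``set of good values'' of the single-assignment evaluation as the witnessing map into $Q_M$ (which is nonempty, as $\sigma$ holds and $(M,\emptyset)\notin Q$, hence a legitimate value), ending after the prefix at the team $Y$ of leaf assignments of that evaluation, and then picking $y_i$ along $s\mapsto f_i(s(\bar z_i))$; the atoms $\dep(\bar z_i,y_i)$ hold by functionality and $\psi'$ holds on the extended team since it there coincides with $\psi$. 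Conversely, from a team witnessing $\sigma^*$ the atoms $\dep(\bar z_i,y_i)$ force $y_i$ on the final team to be a function $g_i$ of $\bar z_i$; extending each $g_i$ arbitrarily to a total function $f_i$ on $M^{|\bar z_i|}$ yields global witnesses for $\sigma$, the point being that $f_i$ is read only at $\bar z_i$, whose values over the team lie in the domain of $g_i$, so that the choice of extension is irrelevant.

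I expect the main obstacle to be exactly this correspondence between the team semantics of a $Q$-prefix and the single-assignment semantics of the same prefix — in effect, extending the treatment of $\exists$ and $\forall$ in Proposition~\ref{prop:4} to an arbitrary monotone $Q$. Downward closure (Proposition~\ref{prop:4}) is used to restrict a witnessing map $F\colon X\to Q_M$ to a single assignment, monotonicity to pass between $F(s)$ and the possibly larger ``set of good values'', and the non-triviality assumptions $(M,M^k)\in Q$ and $(M,\emptyset)\notin Q$ are what keep these passages inside $Q_M$; empty teams cause no trouble since every formula is satisfied by them. By contrast, once the normal form binds each $f_i$ to a single argument tuple, the elimination of the second-order function quantifiers by dependence atoms is routine, exactly as for $\df\equiv\ESO$. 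Combining the two inclusions gives $\ESO(Q)\equiv\df(Q)$ for sentences.
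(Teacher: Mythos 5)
Your proof is correct and takes essentially the same route as the paper: the inclusion $\df(Q)\le\ESO(Q)$ is read off from Proposition~\ref{D(Q)toESO(Q)}, and the converse uses the normal form of Proposition~\ref{ESO(Q)_NF}, the reduction making each Skolem function occur with a unique argument tuple, and the very same translation replacing $\exists f_i$ by $\exists y_i$ guarded by $\dep(\overline{x}^i,y_i)$. Your explicit induction matching the team semantics of the $Q$/$\forall$ prefix with its single-assignment semantics (via flatness, monotonicity, and arbitrary extension of the functions recovered from the dependence atoms) is exactly the content the paper packs into equivalence \eqref{induction} and the ``reversing the steps'' remark.
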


\begin{proof}
% Let us first consider the case that $Q$ satisfies the regularity assumptions 
% of Section \ref{S3.1}: $(M,\emptyset)\notin Q$ and  $(M,M^k)\in Q$ for all 
% $M$. 

Let $\phi$ be a $\ESO(Q)$-sentence. We show that there is a logically 
equivalent sentence $\psi\in  \df(Q)$.
By Proposition~\ref{ESO(Q)_NF} we may assume that $\phi$ is of the form:
\begin{equation}\label{form}
\exists f_1\cdots f_n Q_1' x_1\cdots Q_m' x_m\psi,
\end{equation}
where $Q_i'\in \set{ \forall, Q}$  and  $\psi$ is  quantifier free. Before 
translating  this sentence into $\df(Q)$, we apply certain reductions to it.  
We transform the quantifier-free part $\psi$ of $\phi$ to satisfy the 
condition
that for each of the function symbols $f_i$  there is a
unique tuple $\overline{x}^i$ of pairwise distinct variables such that all 
occurrences of
$f_i$ in $\psi$ are of the form $f_i(\overline{x}^i)$. In order to
achieve this,  we might have to introduce new existentially quantified 
functions and also
universal first-order quantifiers (as in the proof of  Theorem 3.3 in 
\cite{Kontinen:2011B}), but the
quantifier structure of the sentence \eqref{form} does not change. We will now 
assume that the sentence \eqref{form} has this property.

We will next show how the sentence \eqref{form} can be translated into 
$\df(Q)$. We claim that the following
sentence of $\df(Q)$ is a correct translation for \eqref{form}:
\begin{equation}\label{D(Q)-translation}
Q'_1 x_1\cdots Q'_m x_m \exists y_1\cdots \exists y_n \bigl(\bigwedge _{1\le 
	j\le
n}\dep(\overline{x}^i,y_i)\wedge\theta\bigr),
\end{equation}
where $\theta$ is obtained from $\psi$ by replacing all occurrences of the 
term $f_i(\overline{x}^i)$ by $y_i$.

Let us  show that the sentences \eqref{form} and
\eqref{D(Q)-translation} are logically equivalent. Let $\M$  be a structure 
and
let ${\bf f}_1,\ldots,{\bf f}_n$  interpret the function symbols $f_i$. We 
first show the following auxiliary result:
for all teams $X$ with domain
$\set{x_1,\ldots,x_m }$ the following equivalence holds:
\begin{equation}\label{induction}
(\M, \overline{{\bf f}}),X\models \psi \iff \M,X^* \models\theta,
\end{equation}
where $X^* = X(g_1/y_1)\cdots(g_n/y_n)$,  and the functions $g_i$ are defined  
as follows:
\begin{eqnarray*}
g_i(s)&=& {\bf f}_i(s(\overline{x}^i)),
%H_i(s)&=& {\bf h}_i(s(\overline{x}^i)) \textrm{ for } p+1\le i\le r,\\
%G_i(s)&=& {\bf g}_i(s(x_1),\ldots, s(x_m)) \textrm { for } 1\le i\le n.
\end{eqnarray*}
and where $s(\overline{x}^i)$ is the tuple obtained by  pointwise application 
of $s$.  Since $\psi$ and $\theta$ are first-order, by Proposition 
\ref{foisflat}, \eqref{induction} follows from the fact that for each $s\in 
X^*$ it holds that
\begin{equation}\label{newadd}
 (\M, \overline{{\bf f}}),s_i \models \psi \iff  \M,s\models \theta,
\end{equation}
where $s'=s\mathbin\upharpoonright \set{x_1,\ldots,x_m}$.  The claim  is 
proved using induction on the structure of the quantifier-free formula $\psi$.

Let us then show that $\phi$ (see \eqref{form}) and
sentence~\eqref{D(Q)-translation} are logically equivalent. Suppose that 
$\M\models \phi$. Then there are  ${\bf f}_1,\ldots,{\bf f}_n$  such that 
\begin{equation}\label{F1}
	(\M, \overline{{\bf f}})\models Q_1' x_1\cdots Q_m' x_m\psi.   
\end{equation}
Now, by \eqref{F1}, there is a team $X$ arising by evaluating the quantifiers 
$Q'_i$ such that \begin{equation}\label{F2}
	(\M, \overline{{\bf f}}),X\models \psi.   \end{equation}
By \eqref{induction}, and the way  the functions $g_i$ are defined,  we get 
that
\[\M, X^\ast \models \bigwedge _{1\le j\le n}\dep(\overline{x}^i,y_i)\wedge  
	\theta, \]
and that \begin{equation}\label{F3}
\M,X\models \exists y_1\cdots \exists y_n\bigl(\bigwedge _{1\le j\le 
	n}\dep(\overline{x}^i,y_i)\wedge  \theta\bigr).  \end{equation}
Finally, \eqref{F3}  implies that \[ \M\models Q'_1 x_1\cdots Q'_m x_m \exists 
	y_1\cdots \exists y_n (\bigwedge _{1\le j\le
n}\dep(\overline{x}^i,y_i)\wedge\theta).\]
The converse implication is proved by reversing the steps above. Note that 
there is some freedom when choosing the functions ${\bf f}_1,\ldots,{\bf 
	f}_n$, since it is enough to satisfy the equivalence in \eqref{induction}.
\end{proof}

We remark that the theorem also holds for quantifiers satisfying only the 
assumptions that for all $M$, $(M,\emptyset)\notin Q$. This is achieved by a 
small trick:
Let $\phi \in  \ESO(Q)$ be a sentence. Suppose $M$ is such that $(M,M^k) \in 
Q$ then the sentence \eqref{D(Q)-translation}, denoted $\phi^*$ in the 
following, is equivalent to $\phi$ on structures over $M$. However, if $M$ is 
such that $(M,M^k)\notin Q$, then $Q$ is trivially false in structures over 
$M$ and hence $\phi$ is equivalent to $\phi_0\in \ESO$, acquired by replacing 
subformulas headed by $Q$ with $\perp$,   in structures over $M$.

It is easy to show, by induction on $\phi$, that \begin{equation}\label{chain}
 \phi_0 \Rightarrow \phi.
\end{equation}
Let $\phi^*_0\in \df$ be a sentence equivalent to $\phi_0$. Let $\theta$ be 
the following $\df(Q)$ sentence:
\[(Q\bar x \top \land \phi^*) \lor \phi^*_0.
\]
Now, assume that  $(M,M^k) \in Q$, then $\theta$ is equivalent, over $M$, to 
$\phi^* \lor \phi_0^*$. By using the fact that $\phi$ is equivalent to 
$\phi^*$ we can see that whenever $\phi_0^*$ is true $\phi^*$ is also true and 
thus $\theta$ is equivalent, again over $M$, to $\phi$. On the other hand if 
$(M,M^k)\notin Q$ then $\theta$ is equivalent, over $M$, to $\phi_0$ which in 
turn is equivalent to $\phi$.

If we assume  $Q$ only to be monotone (i.e., it may be trivial on some 
universes), we can, by a similar trick as above and using the obvious 
generalization of Proposition \ref{ESO(Q)_NF} to $\ESO(Q_1,\ldots, Q_k)$, 
prove that
$$\ESO(Q,Q^d) \leq \df(Q,Q^d),$$
where $Q^d$ is the dual of $Q$, i.e, $Q^d = \set{(M,A^c) | (M,A) \notin Q}$.
This in turn gives us that for any monotone $Q$:
$$\ESO(Q,Q^d) \equiv \df(Q,Q^d).$$
The logic $\df(Q,Q^d)$ might be considered more natural than $\df(Q)$ since 
$\FO(Q) \leq \df(Q,Q^d)$.

In \cite{Gradel:2011} it is shown that $\I \equiv \ESO$, and hence analogously 
to Proposition \ref{D(Q)toESO(Q)} it follows that $\I(Q) \leq \ESO(Q)$. On the 
other hand, since $\df(Q) \leq \I(Q)$ Theorem \ref{SO->D} implies the 
following.
\begin{thm}\label{IESO}
$\I(Q) \equiv \ESO(Q)$.
\end{thm}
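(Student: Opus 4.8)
The plan is to deduce $\I(Q) \equiv \ESO(Q)$ by combining two inclusions that are already essentially in hand. For one direction, recall from \cite{Gradel:2011} that $\I \equiv \ESO$; the translation of independence logic into $\ESO$ is compositional and formalizes the team semantics exactly as in Proposition \ref{prop23} (only dropping the requirement that $R$ occur negatively, since independence logic is not downward closed). Adding the quantifier $Q$ to both logics, the induction that proves Proposition \ref{D(Q)toESO(Q)} goes through verbatim for $\I(Q)$: the base cases for independence atoms are handled as in the $\I \leq \ESO$ translation, the first-order connectives and quantifiers as before, and the case $Q\bar y\,\theta$ uses precisely the clause
\[ Q\bar y\,\theta \rightsquigarrow \exists P\bigl(\theta^*(P) \wedge \forall \bar x(R(\bar x) \rightarrow Q\bar y\,P(\bar x, \bar y))\bigr). \]
This yields $\I(Q) \leq \ESO(Q)$.

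For the reverse inclusion, I would invoke the chain of known containments rather than reprove anything. Dependence atoms are definable by independence atoms (as noted in the Independence logic subsection), so $\df(Q) \leq \I(Q)$ syntactically — every $\df(Q)$ sentence is, up to this translation, an $\I(Q)$ sentence with the same semantics. Then Theorem \ref{SO->D} gives $\ESO(Q) \equiv \df(Q) \leq \I(Q)$, so in particular $\ESO(Q) \leq \I(Q)$. Putting the two inclusions together gives $\I(Q) \equiv \ESO(Q)$.

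The proof is therefore a short assembly: the only genuine work is checking that the $\I(Q) \leq \ESO(Q)$ induction really is routine, i.e. that nothing in the $Q$-clause or the independence-atom base case interacts badly with the sign restriction on $R$ being dropped. I do not expect any obstacle there, since the $\I \leq \ESO$ translation of \cite{Gradel:2011} already tolerates positive and negative occurrences of $R$, and the $Q$-clause we add is exactly the one from Proposition \ref{D(Q)toESO(Q)}, whose correctness proof does not use downward closure. A point worth a sentence of care is that Theorem \ref{SO->D} as stated requires the non-triviality assumptions $(M,\emptyset)\notin Q$ and $(M,M^k)\in Q$; the surrounding remarks explain how to weaken these, and the same remarks apply here, so $\I(Q) \equiv \ESO(Q)$ holds under whichever assumption on $Q$ we fix for this section. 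Thus the statement follows with essentially no new computation.
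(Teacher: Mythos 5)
Your proposal matches the paper's argument exactly: $\I(Q)\leq\ESO(Q)$ via the compositional translation analogous to Proposition \ref{D(Q)toESO(Q)} (with the sign restriction on $R$ dropped, as in the $\I\equiv\ESO$ result of \cite{Gradel:2011}), and $\ESO(Q)\leq\df(Q)\leq\I(Q)$ from Theorem \ref{SO->D} together with the definability of dependence atoms by independence atoms. The argument is correct and is essentially the paper's own.
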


\section{Characterizing  the open formulas}

In this section we note that Theorem  \ref{SO->D} can be generalized to open 
formulas. We assume that the generalized quantifiers are monotone and satisfy 
the same non-triviality conditions as in the previous section.

\begin{thm} Let $\tau$ be a signature and  $R$ a  $k$-ary relation symbol such 
	that  $R\notin \tau$. Then for every  $\tau\cup \set{R}$-sentence $\psi$ of 
	$\ESO(Q)$ there is a
$\tau$-formula $\phi$ of $\I(Q)$  with free variables $\bar z = z_1,\dotsc, 
z_k$  such that,  for all  $\M$ and  $X$  with domain $\set{\bar z}$:
\begin{equation}\label{neweq2}
 \M,X\models \phi \iff (\M,\rel(X))\models\psi\vee \forall\overline{y}\neg 
 R(\overline{y}) .
\end{equation}
\end{thm}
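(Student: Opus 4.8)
The plan is to combine the argument of Theorem~\ref{SO->D} with Galliani's characterization of the open formulas of $\I$ \cite{Galliani:2011} (the $\I$-analogue of Theorem~\ref{thm24}), the latter taking over the role of the Skolem-term manipulations there. Since $\psi$ is a sentence of the vocabulary $\tau\cup\set{R}$, Proposition~\ref{ESO(Q)_NF} applies with $R$ regarded as a vocabulary symbol, so I may assume $\psi=\exists f_1\cdots f_n\,Q'_1x_1\cdots Q'_mx_m\,\psi_0$ with $Q'_i\in\set{\forall,Q}$ and $\psi_0$ quantifier-free over $\tau\cup\set{R,f_1,\dots,f_n}$; as in the proof of Theorem~\ref{SO->D} (via \cite{Kontinen:2011B}) I further arrange that every $f_i$ occurs in $\psi_0$ only as $f_i(\bar x^i)$ for a fixed tuple $\bar x^i$ of distinct variables from $x_1,\dots,x_m$, possibly at the cost of extra universal quantifiers in the block $Q'_1x_1\cdots Q'_mx_m$ but without changing its shape. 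Let $\theta_0$ be obtained from $\psi_0$ by replacing each term $f_i(\bar x^i)$ with a fresh variable $y_i$; it is quantifier-free over $\tau\cup\set{R}$ with free variables among $\bar x=x_1,\dots,x_m$ and $\bar y=y_1,\dots,y_n$.

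To deal with the matrix, let $S$ be a fresh relation symbol of arity $k+m+n$ (matching the tuple $\bar z\,\bar x\,\bar y$) and let $\sigma$ be the $\ESO$-sentence over $\tau\cup\set{S}$
\[
  \exists N\Bigl(\forall\bar z'\bigl(N(\bar z')\leftrightarrow\exists\bar x'\bar y'\,S(\bar z',\bar x',\bar y')\bigr)\wedge\forall\bar z\,\bar x\,\bar y\bigl(S(\bar z,\bar x,\bar y)\to\theta_0[N/R]\bigr)\Bigr),
\]
expressing that the $\bar z'$-projection $N$ of $S$ satisfies $\theta_0$ at every tuple enumerated by $S$ (no sign restriction on $S$ is imposed, which is why the target logic is $\I(Q)$ and not $\df(Q)$). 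Applying Galliani's theorem to $\sigma$ produces an $\I[\tau]$-formula $\theta$ with free variables $\bar z\,\bar x\,\bar y$ such that, for all $\M$ and teams $Y$ with that domain, $\M,Y\models\theta$ iff $(\M,\rel(Y))\models\sigma\vee\forall\bar w\,\neg S(\bar w)$. Since the only relation that can witness $\exists N$ in $\sigma$ is the projection $\rel_{\bar z}(Y)$ of $Y$ onto $\bar z$, unwinding this equivalence shows that $\M,Y\models\theta$ iff $(\M,\rel_{\bar z}(Y)),s\models\theta_0$ for every $s\in Y$ (the empty team being absorbed by the $\forall\bar w\,\neg S(\bar w)$ disjunct).

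Now set
\[
  \phi:=Q'_1x_1\cdots Q'_mx_m\,\exists y_1\cdots\exists y_n\Bigl(\bigwedge_{1\le i\le n}\dep(\bar x^i,y_i)\wedge\theta\Bigr),
\]
an $\I(Q)[\tau]$-formula with free variables $\bar z$, and verify \eqref{neweq2} as in Theorem~\ref{SO->D}. Because each $Q'_i$ is $\forall$ or $Q$ and, by the standing assumption $(M,\emptyset)\notin Q$, a selection map $F:X\to Q_M$ takes only nonempty values, evaluating $Q'_1x_1\cdots Q'_mx_m$ and then $\exists y_1\cdots\exists y_n$ on a team $X$ with domain $\set{\bar z}$ always leaves the projection onto $\bar z$ equal to $R:=\rel(X)$. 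Hence $\theta$ is evaluated only on teams whose $\bar z$-projection is $R$, so it checks $\theta_0$ rowwise against $R$; moreover $\bigwedge_i\dep(\bar x^i,y_i)$ holds precisely when the $y_i$ are given by $\M$-functions $\mathbf{f}_i$ of $\bar x^i$, in which case checking $\theta_0$ rowwise against $R$ is the same as checking $\psi_0$ rowwise in $(\M,R,\overline{\mathbf{f}})$. Threading this through the quantifiers $Q'_i$ exactly as in Theorem~\ref{SO->D} (using flatness of the first-order $\psi_0$, Proposition~\ref{foisflat}, together with Proposition~\ref{prop:4}) shows that for $R\neq\emptyset$ we have $\M,X\models\phi$ iff there are $\overline{\mathbf{f}}$ with $(\M,R,\overline{\mathbf{f}})\models Q'_1x_1\cdots Q'_mx_m\,\psi_0$, i.e.\ iff $(\M,R)\models\psi$; and for $R=\emptyset$ we have $X=\emptyset$, so $\M,X\models\phi$ holds trivially while the disjunct $\forall\bar y\,\neg R(\bar y)$ is true. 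Thus \eqref{neweq2} holds in all cases.

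The step I expect to be the real obstacle is this last verification — threading the interaction of the $Q$- and $\forall$-quantifiers with the dependence-atom encoding in the manner of Theorem~\ref{SO->D}, and in particular checking that a $Q$-quantifier does not shrink the $\bar z$-projection of the team, which is exactly where the hypothesis $(M,\emptyset)\notin Q$ is used. (If one prefers not to quote Galliani's theorem, the main difficulty instead becomes re-proving his construction of $\theta$ directly: faithfully simulating the relation $R$ by the $\bar z$-columns of the team through the entire Boolean structure of $\theta_0$, the delicate point being that team-disjunction can pass to a subteam whose $\bar z$-projection is strictly smaller. For the full characterization one also wants the converse direction, obtained as for Proposition~\ref{D(Q)toESO(Q)}, now using $\I\equiv\ESO$ \cite{Gradel:2011} so that independence atoms translate into $\ESO$ with no sign restriction on $R$.)
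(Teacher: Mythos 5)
Your proof is correct, but it takes a genuinely different route from the paper's. The paper first pushes $\psi$ into Galliani's normal form in which $R$ is eliminated from the matrix via a conjunct $\forall \bar w(R(\bar w)\leftrightarrow f_1(\bar w)=f_2(\bar w))$, and then, in the team translation, inserts extra independence atoms $x_l \perp_{\set{x_1,\ldots,x_{l-1}}} \bar z$ forcing the witness sets for the prefix $Q_1'x_1\cdots Q_m'x_m$ to be chosen \emph{uniformly}, i.e., independently of the row of $X$; this is needed there because the quantifier-free matrix $\theta$ it borrows from Galliani's Theorem 6.1 (which simulates $R$ by agreement of Skolem values together with an inclusion-style condition on the $\bar z$-columns) is only guaranteed to behave correctly on teams that are products of $\rel(X)$ with a fixed witness tree. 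You instead keep $R$ in the matrix and apply Galliani's characterization theorem as a black box to an auxiliary sentence $\sigma$ that existentially quantifies the $\bar z$-projection and asserts the rowwise condition; the resulting $\theta$ is then correct on \emph{arbitrary} teams with domain $\bar z\bar x\bar y$, so no uniformity constraint is needed. This makes your argument more modular, at the price of relying on the full strength of Galliani's theorem rather than only on his quantifier-free matrix. The one step you should spell out is the direction from $\M,X\models\phi$ to $(\M,\rel(X))\models\psi$ when the choice functions $F_l$ are not uniform across rows of $X$: fix one $s_0\in X$ and restrict the final team to the rows extending $s_0$; these form a Tarskian witness tree for $Q_1'x_1\cdots Q_m'x_m\psi_0$ in $(\M,\rel(X),\overline{\bf f})$, where the $\bf f_i$ come from the dependence atoms evaluated on the whole final team, and this works precisely because $\theta_0$ does not mention $\bar z$ and the relation checked rowwise is the fixed $\rel(X)$ (which has not shrunk, by $(M,\emptyset)\notin Q$). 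With that made explicit, the verification goes through and \eqref{neweq2} holds in all cases, including $X=\emptyset$.
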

\begin{proof}

	The proof follows the proof of Theorem \ref{SO->D} closely with some 
	additional tweaks.
First we translate the formula $\phi$ into the form
\begin{equation}\label{form2}
	\exists f_1\cdots f_n Q_1' x_1\cdots Q_m' x_m \bigl(\forall \bar w\bigl( 
	R(\bar w) \leftrightarrow f_1(\bar w) = f_2(\bar w)\bigr) \land \psi\bigr),
\end{equation}
where $Q_i'\in \set{ \forall, Q}$ and $\psi$ is a quantifier free formula with 
no occurrence of $R$ and such that all occurrences of $f_i$ is of the form 
$f_i(\bar x^i)$. This is done by using the techniques of Proposition 
\ref{ESO(Q)_NF} and Theorem 6.1 in \cite{Galliani:2011}.

	Instead of translating the formula \eqref{form2} into 
	\eqref{D(Q)-translation} we need to assure that the sets chosen by the 
	quantifier prefix $Q_1'x_1 \ldots Q_m'x_m$ are chosen \emph{uniformly} and 
	not depending on the assignments in the team $X$. In $\I(Q)$ we can do this 
	by adding independence atoms in the following way:
	\begin{equation}
Q'_1 x_1\cdots Q'_m x_m \exists y_1\cdots \exists y_n \bigl(
\bigwedge_{1 \le l \le m} x_l \bot_{\set{x_1,\ldots,x_{l-1}}} \bar z\  \land 
\bigwedge _{1\le i\le
	n} y_i \bot_{\overline{x}^i} y_i) \ \wedge\theta\bigr).
	\end{equation}
Here $\theta$ corresponds to the quantifier free formula in the proof of 
Theorem 6.1 in \cite{Galliani:2011}. Observe that $y \bot_{\bar x} y$. is 
equivalent to the dependence atom $D(\bar x,y)$.

	The rest of the proof goes through as in Theorem \ref{IESO}.
\end{proof}

The same proof cannot prove that $\ESO(Q) \le \df(Q)$. This, and the closely 
related question of whatever we can express slashed and backslashed 
quantifiers in $\df(Q)$ remains open.

\section{Conclusion} 

Our results show that the correspondence between dependence logic and 
independence logic on one hand and $\ESO$ on the other is robust in the sense 
that adding generalized quantifiers will not break the correspondences. 

As discussed in section 3, even if we drop the non-triviality conditions, we 
can prove that for any monotone $Q$:
$$\ESO(Q,Q^d) \equiv \df(Q,Q^d).$$
The dual is used only to express that $\lnot Qx\mathord\perp$, which is 
equivalent to $Q^dx \top$. We leave the question of whether $\ESO(Q) \leq 
\df(Q)$ open for arbitrary monotone quantifiers.

%\bibliography{ref}
%\bibliographystyle{plain}

\end{document}